\newtheorem{thm}{Theorem}[section]
\newtheorem{lem}[thm]{Lemma}
\newcommand{\F}{\mathscr{F}}
\newcommand{\G}{\mathscr{G}}
\newcommand{\fH}{\mathscr{H}}
\newcommand{\fP}{\mathscr{P}}
\newcommand{\K}{\mathcal{K}}
\newcommand{\n}{(n_1,\ldots,n_k)}
\theoremstyle{definition}
\newtheorem*{exa}{Example}
\title{Intersection theorems for families of matchings of complete $k$-partite $k$-graphs.}
\author{Adam Mammoliti \thanks{
School of Mathematics and Statistics
UNSW Sydney
NSW 2052, Australia
}\\
\texttt{adam.mammoliti@outlook.com.au}
}
\begin{document}
\date{}
\maketitle

\begin{abstract}
The celebrated {Erd\H{o}s-Ko-Rado} Theorem states that for $n \geq 2k$ a family $\F$ of $k$ subsets of $[n]$ for which each pair of members of $\F$ have a non-empty intersection has size at most $\binom{n-1}{k-1}$ and for $n >2k$ has exactly this size if and only if it is the family of all $k$-subsets of $[n]$ containing a fixed element $x\in [n]$.
Since its discovery, the {Erd\H{o}s-Ko-Rado} Theorem has been generalised extensively and many variants have been found for structures other than sets. 
One such variant is for permutations and so-called generalised permutations. These structures are equivalent to $r$-matchings of the complete bipartite graph $K_{n,m}$ with $r \leq \min\{n,m\}$ in a natural way. 

The culmination of results of several groups of authors constitute an {Erd\H{o}s-Ko-Rado} Theorem for families of generalised permutations and so for families of $r$-matchings of $K_{n,m}$ for all feasible values of $r,n$ and $m$. 
In this paper we generalise this by proving an {Erd\H{o}s-Ko-Rado} Theorem for families of $r$-matchings of complete $k$-partite $k$-graphs, which can be seen as a partial generalisation of the {Erd\H{o}s-Ko-Rado} Theorem itself.
We also prove similar results for $t$-intersecting families, and for families of matchings whose members have sizes from some set of integers $R$, rather than a single size $r$. 
\end{abstract}

\bigskip\noindent \textbf{Keywords:}
The Erd\H{o}s-Ko-Rado Theorem, permutations,  generalised permutations, 
intersecting, $t$-intersecting, complete bipartite graph,
complete $k$-partite $k$-graph, matching.

\noindent {\bf MSC subject classifications: 05D05, 05C65, 05A05 }

%

\section{Introduction}
For an integer $n$, let $[n]$ denote the set $\{ 1,2,\ldots,n\}$.
The power set  of a set $X$ is denoted by $2^{X}$
and the set of all subsets of $X$ of size $r$
is denoted by $\binom{X}{r}$.
A family of sets $\mathscr{F}$ is {\em intersecting}
if $A \cap B \neq \emptyset$ for all $A,B \in \mathscr{F}$ .
For a family of sets $\mathscr{G}$,
a subfamily of the form $\{ A \in \mathscr{G} \,:\, x \in A \}$
for some $x$ is called a {\em star} of $\G$.
The celebrated {Erd\H{o}s-Ko-Rado} Theorem is as follows.
\begin{thm}[The {Erd\H{o}s-Ko-Rado} Theorem~\cite{MR0140419}]\label{thm: EKR}
Let $n \geq 2r$ and $\mathscr{F} \subseteq \binom{[n]}{r}$ be an intersecting family.
Then
\[
|\mathscr{F}| \leq \binom{n-1}{r-1}\,.
\]
Furthermore, for $n>2r$ equality is attained if and only if $\mathscr{F}$ is a star of $\binom{[n]}{r}$.
\end{thm}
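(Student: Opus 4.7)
The plan is to use Katona's cyclic-permutation method, which gives a very clean proof of the bound and sets up the uniqueness analysis. Fix a cyclic ordering $\sigma$ of $[n]$, i.e., place the $n$ elements around a circle, and call a set $A \in \binom{[n]}{r}$ an \emph{arc} of $\sigma$ if its elements occupy $r$ consecutive positions on the circle.

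The first step is the following combinatorial lemma: for $n \geq 2r$ and any cyclic ordering $\sigma$, at most $r$ arcs of $\sigma$ can form an intersecting family. I would prove this by fixing one arc $A$ in the family and noting that any other intersecting arc $B$ must have its ``starting point'' at one of the $r-1$ interior positions of $A$, or end at one of the $r-1$ interior positions; the condition $n \geq 2r$ ensures that starting at an interior position on one side and an interior position on the other give distinct arcs, and that the two resulting intervals obtained by ``cutting $A$'' give rise to at most $r-1$ intersecting arcs in total, yielding the bound $r$.

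The second step is a standard double count of pairs $(\sigma, A)$ with $A \in \mathscr{F}$ an arc of $\sigma$. There are $(n-1)!$ cyclic orderings and, by the lemma, each contributes at most $r$ pairs. On the other hand, a fixed $A \in \binom{[n]}{r}$ is an arc in exactly $n \cdot r!(n-r)!/n = r!(n-r)!$ cyclic orderings. Hence
\[
|\mathscr{F}| \cdot r!(n-r)! \leq r \cdot (n-1)!,
\]
which rearranges to $|\mathscr{F}| \leq \binom{n-1}{r-1}$.

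For the uniqueness statement when $n > 2r$, I would sharpen the cyclic lemma. The main obstacle, and the step requiring the most care, is to show that if an intersecting family of arcs achieves size exactly $r$ in some cyclic ordering (for $n>2r$), then all $r$ arcs share a common element of the cycle. Combined with the fact that equality in the double count forces the cyclic bound to be tight for \emph{every} cyclic ordering, this propagates: choose any two sets $A,B \in \mathscr{F}$ and consider a cyclic ordering in which both appear as arcs; the sharpened lemma forces $A \cap B$ to contain a designated ``center'' element, and a short consistency argument across cyclic orderings shows this element must be the same for all pairs, giving a star. If this extra case analysis becomes cumbersome, an alternative I would fall back on is the shifting (compression) technique: show that the intersection property is preserved under shifts $S_{ij}$, reduce to a shifted family, and then induct on $n$, handling the base $n=2r$ separately.
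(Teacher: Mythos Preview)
The paper does not prove Theorem~\ref{thm: EKR}; it is quoted as a classical background result with a citation to the original paper and is then used (and generalised) in later sections. So there is no ``paper's own proof'' to compare against.

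Your proposal is the standard Katona cyclic-permutation proof and the bound argument is correct as written: the lemma that at most $r$ arcs of a cyclic order on $n\geq 2r$ points can be pairwise intersecting, together with the double count over the $(n-1)!$ cyclic orders and the $r!(n-r)!$ orders in which a fixed $r$-set is an arc, gives exactly $|\mathscr{F}|\leq\binom{n-1}{r-1}$. Your sketch of the lemma is slightly garbled but the intended argument is the right one: for a fixed arc $A$ with positions $1,\ldots,r$, every other intersecting arc is, for some $1\leq i\leq r-1$, either the arc ending at position $i$ or the arc starting at position $i+1$; when $n\geq 2r$ these two are disjoint, so at most one of each pair lies in the family.

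The uniqueness part is where you should be most careful. The sharpened cyclic lemma (for $n>2r$, an intersecting family of exactly $r$ arcs has a common point) is true and not hard, but your propagation step---``pick any $A,B\in\mathscr{F}$, place them as arcs in some cyclic order, and deduce a common centre''---does not by itself pin down a single global element: different cyclic orders could a priori produce different common points, and you need an argument that rules this out. The clean way to finish is to first use the sharpened lemma to show that some element $x$ lies in more than $\binom{n-2}{r-2}$ sets of $\mathscr{F}$ (count over all cyclic orders), and then observe that any $A\in\mathscr{F}$ missing $x$ would be disjoint from too few of the $\binom{n-1}{r-1}-\binom{n-2}{r-2}$ $r$-sets containing $x$ but avoiding some fixed element of $A$; alternatively, your shifting fallback is entirely standard and works without difficulty. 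Either route is fine, but the ``consistency across cyclic orderings'' line as stated is the one genuine gap to fill.
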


The {Erd\H{o}s-Ko-Rado} Theorem has been generalised extensively and many variants have been proven for structures other than sets.
One such variant and the focus of this paper is the set of {\em generalised permutations} $\fP_r(n,m)$,
defined for positive integers $r,n$ and $m$ with $r \leq \min\{n, m\}$ as follows:
\begin{align*}
\fP_r(n,m) = \{ \{(x_1, y_1), \ldots, (x_r, y_r)\} \;:\;& x_1,\ldots, x_r \textrm{ are distinct elements of } [n]\\
 & y_1,\ldots, y_r \textrm{ are distinct elements of } [m] \}\,.
\end{align*}
These indeed generalise permutations, because a permutation $\sigma$ of $[n]$ defines the unique generalised permutation 
\[
P_\sigma = \{(a,b)\in [n]\times[n]\;:\; \sigma(a)=b\}\,,
\]
of $\fP_n(n,n) $. 

There has been extensive work on {Erd\H{o}s-Ko-Rado} type theorems for permutations and generalised permutations;
we provide a brief outline of some of those results relevant to us here.
Deza and Frankl~\cite{MR0439648}
proved that a family of intersecting permutations,
i.e., an intersecting subfamily of $\fP_n(n,n)$,
has size at most $(n-1)!$.
Cameron and Ku~\cite{MR2009400} and Larose and Malvenuto~\cite{MR2061391}
independently showed that an intersecting family of permutations
has size $(n-1)!$ if and only if the family is a star of $\fP_n(n,n)$.
Larose and Malvenuto (see \cite[Theorem 5.1]{MR2061391})
also proved that intersecting subfamilies of $\fP_r(r,n)$
have size at most $(n-1)_{r-1}$, where
for non-negative integers $a$ and $b$, $(a)_b$ denotes $\frac{a!}{(a-b)!}$.
Ku and Leader~\cite{MR2202076}
showed that $|\mathscr{F}| \leq \frac{(n-1)_{r-1} (n-1)_{r-1}}{(r-1)!}$
for any intersecting family $\mathscr{F} \subseteq \fP_r(n,n)$ and proved that only stars have the maximum size, except for a small number of cases.
These remaining cases were subsequently shown by Li and Wang~\cite{MR2285800}.
Borg and Meagher~\cite{MR3301134} proved that
$|\mathscr{F}| \leq \frac{(n-1)_{r-1} (m-1)_{r-1}}{(r-1)!}$ for
intersecting families $\mathscr{F} \subseteq \fP_r(n,m)$ for $r < \min\{n,m\}$
and, furthermore, that equality is attained if and only if $\mathscr{F}$ is a star of $\fP_r(n,m)$.

We encapsulate all of the results above in the following theorem.

\begin{thm}\label{thm: gen perm}
Let $\mathscr{F} \subseteq \fP_r(n,m)$ be an intersecting family, where $r,n$ and $m$ are positive integers such that $r \leq \min\{n,m\}$.
Then
\[
|\mathscr{F}| \leq \frac{(n-1)_{r-1} (m-1)_{r-1}}{(r-1)!}\,.
\]
Furthermore, equality is attained if and only if $\mathscr{F}$ is a star of $\fP_r(n,m)$.
\end{thm}
\noindent
In this paper, we will consider a generalisation of generalised permutations.
We first reinterpret generalised permutations in terms of hypergraphs.
A {\em hypergraph} $(V,E)$ is a pair consisting of a set~$V$
whose elements are called {\em vertices} and a set $E\subseteq 2^V$ whose elements are called {\em edges}.
A {\em $k$-graph} is a hypergraph for which every edge has size $k$, in particular, a $2$-graph is just a graph.
The {\em complete $k$-partite $k$-graph} with parts of sizes $n_1,\ldots ,n_k$,
denoted $\K\n$,
is the $k$-graph whose vertex set is the disjoint union of the sets $[n_1], \ldots, [n_k]$
 and whose edge set is exactly every $k$-set which contains exactly one vertex
in $[n_i]$ for all $i$.
In particular, $\K(n_1)$ is the set of singletons of $[n_1]$
and $\K(n_1,n_2)$ is the complete bipartite graph with parts $[n_1]$ and~$[n_2]$;
we (partially) conform to convention by denoting the latter graph as $K(n_1,n_2)$.

An {\em $r$-matching} of a hypergraph is a set of $r$ vertex disjoint edges of that hypergraph.
We let $\K_r\n $ and $K_r(n,m) $ denote
the set of $r$-matchings of $\K\n$ and $K(n,m) $, respectively.
By associating each element 
$(x,y) \in [n]\times [m]$ to the edge of $K(n,m)$ incident to $x$ and $y$,
there is a natural equivalence between generalised permutations in $\fP_r(n,m)$ and $r$-matchings in $K_r(n,m)$. Therefore, the set of $r$-matchings $\K_r\n $ can be seen as a generalisation of the set of generalised permutations $\fP_r(n,m)$. 
Moreover for a family $\F \subseteq K_r(n,m)$ two members of its equivalent counterpart $\G\subseteq \fP_r(n,m)$ intersect if and only if the corresponding members of $\F$ share a common edge.
Thus, we say a family $\F \subseteq \K_r\n$ is {\em intersecting} if every pair of members of $\F$ share a common edge.

Considering intersecting families of graphs and hypergraphs is not new. 
We briefly outline some of the previous work related to intersecting families of graphs and hypergraphs.
Meagher and Moura \cite{MeMo} proved an {Erd\H{o}s-Ko-Rado} Theorem for intersecting families of perfect matchings, i.e.~$(n/k)$-matchings,
of the complete $k$-graph on $n$ vertices $\K^k_n$, with $k|n$ and in particular an {Erd\H{o}s-Ko-Rado} Theorem for intersecting families of perfect matchings of the complete graph $K_n$ with $n$ even. 
Let $\fH$ be a family of graphs on a common vertex set $V$. 
We say a family of labelled graphs $\F$ on the vertex set $V$ is $\fH$-intersecting if the intersection of each pair of members of $\F$ contains a member of $\fH$ as a subgraph. 
For a graph $H$, a family of graphs is $H$-intersecting if it is $\fH$-intersecting, 
where $\fH$ is the family of all labelled graphs isomorphic to $H$.
Simonovits and Sós~\cite{SiSo76,SiSo78} proved intersection theorems for families of $\fH$-intersecting graphs, for various families $\fH$.
Ellis, Filmus and Friedgut~\cite{EFF} prove a long standing conjecture of Simonovits and Sós which asserted that a maximum sized $K_3$-intersecting family of graphs is a family consisting of all the graphs containing a fixed $K_3$.
Berger and Zhao~\cite{BeZh} prove an analogous result for $K_4$-intersecting families as well as cross intersecting variants and stability results.

The main result of this paper is
the following generalisation of Theorem~\ref{thm: gen perm} to subfamilies of  $\K_r\n$.

\begin{thm}\label{thm: Gen int perms} 
Let $\mathscr{F} \subseteq  \K_r\n$ be an intersecting family, where
$k\geq 2$, $r\geq 1$ and $n_1,\ldots, n_k$ are integers that are at least $r$.
Then
\[
  |\mathscr{F}| \leq \frac{(n_1-1)_{r-1}  \cdots (n_k-1)_{r-1}}{(r-1)!} \,.
\]
Furthermore, equality is attained if and only if $\mathscr{F}$ is a star of  
$\K_r\n$.
\end{thm}
\noindent
In fact, as  $\K_r(n_1)$ is equivalent to $\binom{[n_1]}{r}$,
Theorem~\ref{thm: Gen int perms} can be seen as a generalisation of the {Erd\H{o}s-Ko-Rado} Theorem for $n_1 > 2r$. Theorem~\ref{thm: Gen int perms} will be proven using a relatively simple inductive argument on $k$, where Theorem~\ref{thm: gen perm} forms the base case.
We also prove {Erd\H{o}s-Ko-Rado} type theorems for families $\F$ of matchings; whose members may have any size from a set of integers $R$ rather than just a single size $r$ (Theorem~\ref{thm: gen int hyperperm non-uniform}), for which each pair of members of $\F$ share $t$ common edges (Theorems~\ref{thm: Gen t-int perms hyper} and~\ref{thm:t-intHyperPerm}) or both (Theorem~\ref{thm:t-int gen perms hyper non-uniform}).

The paper is organised as follows.
In Section~\ref{sec:definitions} we present definitions and preliminary results.
The proof of Theorem~\ref{thm: Gen int perms} is given in Section~\ref{sec:MainResults}.
This is followed by the proof of Theorem~\ref{thm: gen int hyperperm non-uniform}. 
Section~\ref{sec:MainResults} ends with the proofs of Theorems~\ref{thm: Gen t-int perms hyper}--\ref{thm:t-int gen perms hyper non-uniform}.
The paper concludes with Section~\ref{sec:conclusion} in which we briefly discuss previous conjectures and open questions as well as new ones that naturally arise from the work in this paper.

\section{Definitions and preliminary results}
\label{sec:definitions}

For what follows, let $k,t, r, n_1, \ldots, n_k$ be integers such that $k \geq 2$ and $1 \leq t \leq r \leq n_1 \leq \cdots \leq n_k$.
Let $\mathcal{K}(n_1,\ldots, n_k)$ denote the complete $k$-partite $k$-graph
with parts $[n_1], \ldots, [n_k]$, as defined in the Introduction.
For the purposes of this paper, we take the edge set of $\K(n_1,\ldots, n_k)$ to be the set of  $k$-tuples in $[n_1] \times \cdots \times[n_k]$, where for all $y \in [n_i]$ with $i\in[k]$, $y$ is incident to the edge $(x_1,\ldots,x_k) \in [n_1] \times \cdots \times[n_k]$ if and only if $x_i=y$.
Let $\K_r\n$ be the set of $r$-matchings of $\mathcal{K}(n_1,\ldots, n_k)$.
That is
\[
\K_r(n_1,\ldots, n_k) = \{ \{(x_{i,1}, x_{i,2},\ldots, x_{i,k}) \;:\; i \in [r] \} \;:\; x_{1,j},\ldots, x_{r,j} \in [n_j] 
\textrm{ are distinct for all }j \} \,.
\]
We abuse notation slightly by identifying $\K_1\n$ with the edge set of $\K\n$.

Let $M \in \K_r\n$ and $\F \subseteq \K_r\n$.
For an edge $e =(x_1,\ldots ,x_k) \in \K_1\n$ we define 
$R_j(e) = (x_1,\ldots,x_{j-1},x_{j+1},\ldots , x_k)$.
We further define
\[
R_j(M) = \{R_j(e)\;:\; e \in M\} \quad \text{ and } \quad R_j(\F) = \{R_{j}(M)\;:\; M \in \F\}\,.
\]
For $i \neq j$ and $e \in \K_1\n$, we define $S_j^i(e) = (x_i,x_j) \in K_1(n_i,n_j)$ where $x_i$ and $x_j$ are the vertices in $[n_i]$ and $[n_j]$ incident to $e$, respectively.
We further define
\[
S_j^i(M) = \{S_j^i(e)\;:\; e \in M\} \quad \text{ and } \quad S_j^i(\F) = \{S_j^i(M)\;:\; M \in \F\}\,.
\]


For a graph or a set of edges $M$, we let $V(M)$ denote the vertices of $M$.
Moreover, if $M$ is in fact a set of edges of $\K\n$,
then we let $V_{i}(M)$ denote the set of vertices in $[n_i]$ incident to an edge in $M$.
Notice that, for each $M \in \K_r\n$, the pair $(R_j(M),S_j^i(M))$ is unique for each $i \neq j $.
Conversely, for $X \in R_j(\K_r\n)$ and $Y \in K_r(n_i,n_j)$, if $V_i(X) = V_i(Y)$, then there is a unique $r$-matching $M$ in $\K_r\n$ such that $R_j(M)=X$ and $S_j^i(M)=Y$, namely
\[
M = \left\lbrace e \cup \{x_j\} \;:\; e \in X \text{ and } (x_i,x_j) \in Y \text{ for the $x_i \in e \cap [n_i]$}\right\rbrace.
\]
Therefore, for $X \in R_j(\K_r\n)$ and $Y \in K_r(n_i,n_j)$ such that $V_i(X) = V_i(Y)$, we define $X \ltimes_i Y$ to be the unique matching $M\in \K_r\n$ such that $R_j(M)=X$ and $S_j^i(M)=Y$
and say that $X$ and $Y$ are {\em compatible}.
\begin{figure}[hb]
\begin{center}
\begin{tikzpicture}[scale=1.125] 
  \pgfmathsetlengthmacro\scface{1.5cm}
  \pgfmathsetlengthmacro\scfacv{2cm}
  \pgfmathsetlengthmacro\n{.25}

\draw[color= black]{
(1.5*\scface,-0.75*\scface)node[color=black]{\text{}$M$}
};
\draw[color= black]{
(1.5*\scface,3.75*\scface)node[color=black]{\text{A matching of }$\mathcal{K}(4,4,4,4)$}
};
\begin{scope}
\filldraw[color=black,fill=black!50, line width = \scfacv*0.045,opacity=0.8] 
($(0,2*\scface)+(-135:\n*\scface)$) arc(225:45:\n*\scface) 
-- ($(1*\scface,1*\scface)+(45:\n*\scface)+(135:-\n*\scface+\n*\scface*sqrt 2)$)
-- ($(2*\scface,1*\scface)+(135:\scface*\n)+(45:-\n*\scface+\n*\scface*sqrt 2)$)
-- ($(3*\scface,2*\scface)+(135:\n*\scface)$) arc(135:-45:\n*\scface) 
-- ($(2*\scface,1*\scface)+(-45:\scface*\n)$)arc(-45:-90:\n*\scface) 
--($(1*\scface,1*\scface)+(-90:\n*\scface )$)arc(-90:-135:\n*\scface) 
--cycle;
\filldraw[color=black,fill=black!50, line width = \scfacv*0.045,opacity=0.8] 
($(0,0*\scface)+(-90:\n*\scface)$) arc(270:90:\n*\scface) 
-- ($(3*\scface,0*\scface)+(90:\n*\scface)$)arc(90:-90:\n*\scface) 
--cycle;
\filldraw[color=black,fill=black!50, line width = \scfacv*0.045,opacity=0.8] 
($(0,1*\scface)+((-45: \scface*\n)$) arc(-45:-45-180:\scface*\n)
-- ($(2*\scface,3*\scface)+(-45-180: \n*\scface)$)arc(135:90:\n*\scface)
-- ($(3*\scface,3*\scface)+(90: \n*\scface)$)arc(90:-90:\n*\scface)
-- ($(2*\scface,3*\scface)+(-45: \n*\scface)+(-135:-\n*\scface+\n*\scface*sqrt 2)$)
--cycle; 
\filldraw[color=black,fill=black!50, line width =  \scfacv*0.045,inner sep=\scfacv*0.03, minimum width=\scfacv*0.08,opacity=0.8]  
($(0,3*\scface)+(-90:\n*\scface)$) arc(-90:-270:\n*\scface) 
-- ($(1*\scface,3*\scface)+(90:\n*\scface)$)arc(90:45:\n*\scface) 
-- ($(3*\scface,1*\scface)+(45:\n*\scface)$)arc(45:-135:\n*\scface) 
-- ($(1*\scface,3*\scface)+(-135:\n*\scface) +(-45:-\n*\scface+\n*\scface*sqrt 2)$)
--cycle;  
\end{scope}
\begin{scope}
 \foreach \y in {0,1,2,3}
{
\draw[line width = \scfacv*0.015, color = black!50]{
(0,\y*\scface) node[circle, draw, fill=black!10,inner sep=\scfacv*0.03]{
}
(1*\scface,\y*\scface) node[circle, draw, fill=black!10,inner sep=\scfacv*0.03]{
}
(2*\scface,\y*\scface) node[circle, draw, fill=black!10,inner sep=\scfacv*0.03]{
}
(3*\scface,\y*\scface) node[circle, draw, fill=black!10,inner sep=\scfacv*0.03]{
}
};
}
\end{scope}
\end{tikzpicture}
\quad
\begin{tikzpicture}[scale=1.125] 
  \pgfmathsetlengthmacro\scface{1.5cm}
  \pgfmathsetlengthmacro\scfacv{2cm}
  \pgfmathsetlengthmacro\n{.25}
\begin{scope}[blend group = screen]

\draw[color= black]{
(1.5*\scface,3.75*\scface)node[color=black]{\text{A pair of matchings of $\mathcal{K}(4,4,4)$ and $ K(4,4)$}}
};
\draw[color= black]{
(1.5*\scface,-0.75*\scface)node[color=black]{\text{$R_4(M)$ and $S_4^3(M)$}}
};

\filldraw[color=blue,fill=blue!50, line width = \scfacv*0.045,opacity=0.8] 
($(0,2*\scface)+(-135:\n*\scface)$) arc(225:45:\n*\scface) 
-- ($(1*\scface,1*\scface)+(45:\n*\scface)+(135:-\n*\scface+\n*\scface*sqrt 2)$)
-- ($(2*\scface,1*\scface)+(90:\scface*\n)$)arc(90:-90:\n*\scface) 
--($(1*\scface,1*\scface)+(-90:\n*\scface )$)arc(-90:-135:\n*\scface) 
--cycle;

\filldraw[color=red,fill=red!50, line width = \scfacv*0.045,opacity=0.8] 
($(2*\scface,1*\scface)+(-45:\scface*\n)$)arc(-45:-45-180:\n*\scface)
-- ($(3*\scface,2*\scface)+(135:\n*\scface)$) arc(135:-45:\n*\scface) 
--cycle;

\filldraw[color=blue,fill=blue!50, line width = \scfacv*0.045,opacity=0.8] 
($(0,0*\scface)+(-90:\n*\scface)$) arc(270:90:\n*\scface) 
-- ($(2*\scface,0*\scface)+(90:\n*\scface)$)arc(90:-90:\n*\scface) 
--cycle;

\filldraw[color=red,fill=red!50, line width = \scfacv*0.045,opacity=0.8] 
($(2*\scface,0*\scface)+(-90:\n*\scface)$) arc(270:90:\n*\scface) 
-- ($(3*\scface,0*\scface)+(90:\n*\scface)$)arc(90:-90:\n*\scface) 
--cycle;

\filldraw[color=blue,fill=blue!50, line width = \scfacv*0.045,opacity=0.8] 
($(0,1*\scface)+(-45: \scface*\n)$) arc(-45:-45-180:\scface*\n)
-- ($(2*\scface,3*\scface)+(135: \scface*\n)$)arc(135:-45:\n*\scface)
--cycle;

\filldraw[color=red,fill=red!50, line width = \scfacv*0.045,opacity=0.8] 
($(2*\scface,3*\scface)+(-90: \n*\scface)$)arc(-90:-270:\n*\scface)
--($(3*\scface,3*\scface)+(90: \n*\scface)$)arc(90:-90:\n*\scface)
--cycle;

\filldraw[color=blue,fill=blue!50, line width =  \scfacv*0.045,inner sep=\scfacv*0.03, minimum width=\scfacv*0.08,opacity=0.8]  
($(0,3*\scface)+(-90:\n*\scface)$) arc(-90:-270:\n*\scface) 
-- ($(1*\scface,3*\scface)+(90:\n*\scface)$)arc(90:45:\n*\scface) 
-- ($(2*\scface,2*\scface)+(45:\n*\scface)$)arc(45:-135:\n*\scface) 
-- ($(1*\scface,3*\scface)+(-135:\n*\scface) +(-45:-\n*\scface+\n*\scface*sqrt 2)$)
--cycle;

\filldraw[color=red,fill=red!50, line width =  \scfacv*0.045,inner sep=\scfacv*0.03, minimum width=\scfacv*0.08,opacity=0.8]  
($(2*\scface,2*\scface)+(-135:\n*\scface)$)arc(-135:-315:\n*\scface) 
--($(3*\scface,1*\scface)+(45:\n*\scface)$)arc(45:-135:\n*\scface) 
--cycle;  
\end{scope}
\begin{scope}
 \foreach \y in {0,1,2,3}
{
\draw[line width = \scfacv*0.015, color = black!50]{
(0,\y*\scface) node[circle, draw, fill=black!10,inner sep=\scfacv*0.03]{
}
(1*\scface,\y*\scface) node[circle, draw, fill=black!10,inner sep=\scfacv*0.03]{
}
(2*\scface,\y*\scface) node[circle, draw, fill=black!10,inner sep=\scfacv*0.03]{
}
(3*\scface,\y*\scface) node[circle, draw, fill=black!10,inner sep=\scfacv*0.03]{
}
};
}
\end{scope}
\end{tikzpicture}
\end{center}\caption{A matching $M$ (in black) and its associated compatible matchings $R_4(M)$ (in blue) and $ S_4^3(M)$ (in red).}\label{f:ex}

\end{figure}
Figure~\ref{f:ex} depicts an example of a matching and one of its associated compatible pairs.
For $X \in R_{j}(\K_r\n)$ and $i \neq j$ let
\[
N_X^i(\mathscr{F}) =
\left\lbrace Y\in S_j^i(\F)
\;:\;  X \ltimes_i Y \in \F \right\rbrace . 
\]

We say that a family $\F \subseteq \K_r\n$ is $t$-intersecting if any two members of $\F$ share at least $t$ edges.
In particular, a $1$-intersecting family is just an intersecting family as defined in the Introduction; we continue to use the latter terminology.
For a family $\G$ and a $t$-set $C$, the {\em $t$-star of $\G$ with centre $C$} is the family 
$\{G \in \G : C \subseteq G\}$. When the context is clear we will simply refer to such a family as a $t$-star. 
We also refer to a $1$-star simply as a star, which agrees with the definition of a star given in the Introduction.
We now note several simple properties of the definitions given thus far. 
Below and henceforth, we let $K(A,B)$ denote the complete bipartite graph with parts $A \subseteq [n_i]$ and $B \subseteq [n_j]$, which is considered to be an induced subgraph of $K(n_i,n_j)$
and let $K_r(A,B)$ be the set of $r$-matchings of $K(A,B)$.

\begin{lem}\label{lem:properties}
Let $ \F \subseteq \K_r\n$, $i,j \in [k]$ be distinct integers and $X \in R_{j}( \F)$.
Then the following hold:
\begin{itemize}
\item[{\rm(i)}]   if $\mathscr{F}$ is $t$-intersecting,
                  then $R_j(\mathscr{F})$, $S_j^i(\F)$ and $N_X^i(\mathscr{F}) $ are $t$-intersecting;
\item[{\rm(ii)}]  $N^i_X(\F) \subseteq K_r\big(V_i(X),[n_j]\big)$;
\item[{\rm(iii)}] if $\F$ is the $t$-star of $\K_r\n$ with centre $C$, then $R_j(\F)$, $S_j^i(\F)$ and $N_X^i(\F)$ are the $t$-stars
of $R_j(\K_r\n)$, $K_r(n_i,n_j)$ and $K_r\big(V_i(X),[n_j]\big)$ with centres $R_j(C)$, $S_j^i(C)$ and $S_j^i(C)$, respectively;
\item[{\rm(iv)}]  $|\mathscr{F}| = \sum_{X \in R_j(\mathscr{F})} |N_X^i(\mathscr{F})|$.
\end{itemize}
\end{lem}

\begin{proof}
If $L,M \in \F$ and $ L \cap M=T$, then clearly $R_j(L)\cap R_j(M) \supseteq R_j(T)$ 
and $S_j^i(L)\cap S_j^i(M)\supseteq S_j^i(T)$. Hence, if $\F$ is $t$-intersecting then so are $R_j(\F)$, $S_j^i(\F)$ 
and $N_X^i(\F)\subseteq S_j^i(\F)$ for all $X \in R_j(\F)$.
This establishes (i).
By definition, every $Y \in N_X^i(\F)$ is compatible with $X$ and in particular $V_i(Y)=V_i(X)$.
It follows that $Y \in K_r(V_i(X),[n_j])$ for all $Y \in N_X^i(\F)$, proving (ii).

To prove (iii), let $\F$ be the $t$-star of $\K_r\n$ with centre $C$. 
We prove that $R_j(\F)$, $S_j^i(\F)$ and $N_X^i(\F)$ for each $X \in R_j(\F)$ are the appropriate $t$-stars. 
Notice that for every $M\in\F$, $R_j(M) \supseteq R_j(C)$ and $S_j^i(M) \supseteq S_j^i(C)$,
since $C \subseteq M$. 
So $R_j(\F)$ is a subset of the $t$-star of $R_j(K_r\n)$ with centre $R_j(C)$, 
$S_j^i(\F)$ is a subset of the $t$-star of $K_r(n_i,n_j)$ with centre $S_j^i(C)$,  and 
$N_X^i(\F)$ is a subset of the $t$-star of $K_r\big(V_i(X),[n_j]\big)$ with centre $S_j^i(C)$.
We complete the proof of (iii) by showing $R_j(\F)$,
$S_j^i(\F)$ and $N_X^i(\F)$ contain the appropriate $t$-stars.
Let $R_j(C) \subseteq X' \in R_j(\K\n)$ and $S_j^i(C) \subseteq Y' \in K_r\big(V_i(X'),[n_j]\big)$.
Then $M = X'\ltimes_i Y'$ is an $r$-matching of $\K\n$ containing $C= R_j(C)\ltimes_i S_j^i(C)$ and so $M \in \F$, since $\F$ is the $t$-star of $\K_r\n$ with centre $C$.
It follows that $X' \in R_j(\F)$ and $Y' \in N_{X'}^i(\F) \subseteq S_j^i(\F)$ and hence 
(iii) holds, since for any $Y'$ such that $S_j^i(C) \subseteq Y' \in K_r(n_i,n_j)$, $Y'$ is in $K_r(V_i(X'),[n_j])$
for some $R_j(C) \subseteq X' \in R_j(\K\n)$.

Finally we prove (iv). Let $i,j \in [k]$ be distinct integers. Recall that $(R_j(M),S_j^i(M))$ is unique for each $M \in \K_r\n$.
Therefore,
\begin{align*}
|\mathscr{F}|
&= |\{(R_j(M),S_j^i(M))\;:\; M \in \F \}| \\
&=\sum_{X \in R_j(\mathscr{F})} |\{(X,Y) \;:\; Y \in S_j^i(\F) \text{ and }X\ltimes_i Y\in\F\}|
= \sum_{X \in R_j(\F)}|N^i_X(\F)|\,,
\end{align*}
establishing statement (iv), where the second equality holds as the summation is a simple restatement of the expression preceding it
and the final equality holds by the definition of $N^i_X(\F)$.
\end{proof}

The machinery required to prove the bound of Theorem~\ref{thm: Gen int perms} has been given in Lemma~\ref{lem:properties}.
Establishing the uniqueness of families of maximum size requires more, most of which is encapsulated in Lemma~\ref{lem:whenSubfamststar}. 
Before proving the lemma, we require (a special case of) the following result.
In the lemma below, we let $\K(A_1,\ldots ,A_k)$ denote the complete $k$-partite $k$-graph whose $i$-th part is $A_i \subseteq [n_i]$ for each $i \in [k]$, which is considered to be an induced subhypergraph of $\K\n$
and we let $\K_r(A_1,\ldots ,A_k)$ be the set of $r$-matchings of $\K(A_1,\ldots ,A_k)$.
\begin{lem}\label{l:distMsInStars}
Let $r,r'\geq t$ be positive integers.
Let $A_i \subseteq [n_i]$ and $B_i \subseteq [n_i]$ for all $i\in [k]$ where for some $j\in[k]$, $|A_j|\geq t+2$ and $|B_j|\geq t+2$. If $\G$ and $\G'$ are the non-empty $t$-stars of $\K_r(A_1,\ldots ,A_k)$ and $\K_{r'}(B_1,\ldots ,B_k)$ with centres $C$ and $C'$, respectively, then there exists $M \in \G$ and $M'\in \G'$ such that $M \cap M' = C \cap C'$.
\end{lem}
\begin{proof}
Suppose for a contradiction that for every  $C \subseteq M \in \G$ and $C' \subseteq M' \in \G'$, 
$M \cap M' \supsetneq C \cap C'$. Let $M$ and $M'$ be two such matchings whose intersection is minimal.
By assumption, there is some $e=(x_1,\ldots, x_k)\in M \cap M'$ that is not in $C \cap C'$.
Without loss of generality, suppose that $e\notin C$.
Note that $x_i \notin V_i(C)$ for all $i \in [k]$, since $C \cup \{e\}$ is contained in the matching $M$.
As $|A_j|\geq t+2$, there exists an element $y_j \in A_j-V_j(C)$ other than $x_j$.
Let $e' = (e-\{x_j\})\cup \{y_j\}$. 
If $y_j\in V_j(M)$, let
$f = (y_1,\ldots, y_k)$ be the edge in $M$ incident to $y_j$
and let $f' = (f-\{y_j\})\cup\{x_j\}$.
Note that, when $f$ exists, $y_i \neq x_i$ and $y_i \notin V_i(C)$ for all $i\in [k]$.
Let
\[
L =
\begin{cases}
  \big(M-\{e\}\big)\cup \{e'\} & \text{if $y_j \notin V_j(M)$} \\
  \big(M- \{e,f\}\big)\cup \{e',f' \}&\text{otherwise. }
\end{cases}
\]
Clearly $L$ is an $r$-matching of $\K(A_1,\ldots, A_k)$ containing $C$ and so $L \in \G$. Yet, as $M$ and $M'$ are matchings containing $e$, $ L \cap M' \subsetneq M \cap M'$, contradicting the minimality of $M \cap M'$.
\end{proof}
In particular, the lemma shows that two $t$-stars with different centres can not both be contained in a $t$-intersecting family, in general. This observation will prove useful in the proof of Lemma~\ref{lem:whenSubfamststar} soon to follow and in Section~\ref{sec:MainResults}. 
However, the lemma fails if $|A_i| \leq t+1$ for all $i\in [k]$, as the following example shows.
Let $A_1,\ldots, A_k$ and $B_1,\ldots, B_k$ be sets  such that $A_i \in\binom{[n_i]}{t+1}$, $B_i \subseteq [n_i]$ has size $r'\geq t+1$ and $A_i \cap B_i \neq \emptyset$ for all $i\in[k]$. 
Let $e=(x_1,\ldots,x_k) \in \K_1\n$ be an edge such that $x_i \in A_i \cap B_i$ for all $i \in [k]$. 
Let $C \in \K_{t}(A_1,\ldots ,A_k)$ and $C' \in \K_{t}(B_1,\ldots ,B_k)$ so that $x_i \notin V_i(C)$
for all $i\in [k]$ and $e \in C'$. Then $A_i-V_i(C) =\{x_i\}$, since $|A_i|= t+1 = |V_i(C)|+1$ and so $\G$, the $t$-star of $\K_{t+1}(A_1,\ldots, A_k)$ with centre $C$, contains the single $(t+1)$-matching $M=C \cup \{e\}$. 
On the other hand, every matching $M'$ in $\G'$, the $t$-star of $\K_{r'}(B_1,\ldots, B_k)$ with centre $C'$, contains $C'$ and so
\[
M \cap M'=(C\cap M')\cup(\{e\}\cap M') \supseteq (C\cap C')\cup\{e\}\,.
\]
Moreover, if $n_i =t+1$ for all $i$, let $A_i = [t+1]=B_i$ for $i \in [k]$,
$C$ and $C'$ to be distinct $t$-subsets of some $(t+1)$-matching $T$ of $\K\n$ (and $e\in C'-C$)
and $\G$ and $\G'$ be the $t$-stars of $\K_{r}\n$ with centres $C$ and $C'$, respectively.
Then $\G=\{T\}=\G'$, but both are $t$-stars with different centres, namely $C$ and $C'$, respectively.
This demonstrates that two $t$-stars of $\K_{r}\n$ with different centres can be the same family.
However, this can only occur if $r=n_1=\cdots =n_k=t+1$, otherwise any two stars with different centres are distinct.



We can now prove Lemma~\ref{lem:whenSubfamststar}. Recall that $n_1 \leq \cdots \leq n_k$.

\begin{lem}\label{lem:whenSubfamststar}
Let $\F \subseteq \K_r\n$ be a $t$-intersecting family where $k\geq 3$ and $n_k \geq t+2$.
If $R_k(\F)$ is a $t$-star of $R_k(\K_r\n)$ and $N_X^1(\F)$ are $t$-stars of $K_r(V_1(X),[n_k])$ for all $X \in R_k(\F)$, then $\F$ is a $t$-star of $\K_r\n$.
\end{lem}
\begin{proof}
The result is clear if $r=t$, as then $\F$, $R_k(\F)$ and $N^1_X(\F)$ each have one member. 
So we can assume that $r \geq t+1$. 
By Lemma~\ref{lem:properties}(i), $S_k^1(\F)$ is $t$-intersecting and, as $n_k\geq t+2$, 
each $t$-star $N_X^1(\F) \subseteq S_k^1(\F)$ must have the same centre for all $X\in R_k(\F)$,
by Lemma~\ref{l:distMsInStars}.
Therefore, we let $C_S$ denote the common centre of all $N_X^1(\F)$ with $X\in R_k(\F)$.
If $n_i = t+1$ for all $i\neq k$, then $R_k(\F)$ has a single member $T$ and necessarily $V_1(T)=[n_1]$. So $R_k(\F)$ is the $t$-star with centre $C_R$, where $C_R$ is the $t$-subset of $T$ such that $V_1(C_R)=V_1(C_S)$.
Otherwise, we let $C_R$ be the unique centre of $R_k(\F)$.

First we show that $V_1(C_R)=V_1(C_S)$, which is immediate if $n_i = t+1$ for all $i \neq k$. 
So we can assume that $n_\ell \geq t+2$ for some $\ell< k$.
Using Lemma~\ref{l:distMsInStars} where $\G=R_k(\F)=\G'$,
there exists distinct $X,X' \in R_k(\F)$ such that $X \cap X' = C_R$.
Similarly as $n_k \geq t+2$, there exists distinct $Y \in N_X^1(\F)$ and $Y' \in N_{X'}^1(\F)$
such that $Y \cap Y' = C_S$, by Lemma~\ref{l:distMsInStars}.
By the definitions of  $N_X^1(\F)$ and $N_{X'}^1(\F)$, $M=X\ltimes_1 Y$ and $M'=X' \ltimes_1 Y'$ are both in $\F$ and must satisfy
\[
R_k(M \cap M')\subseteq  R_k(M)\cap R_k(M')=X\cap X' =C_R 
\]
and
\[
S_k^1(M \cap M')\subseteq  S_k^1(M)\cap S_k^1(M')=Y\cap Y' =C_S\,.
\]
As $\F$ is $t$-intersecting, $|M \cap M'|\geq t$ and we must have  that $|M\cap M'|=t$, $R_k(M \cap M')=C_R$
and $S_k^1(M \cap M')=C_S$, since $|C_R|=t=|C_S|$. It follows that 
\[
V_1(C_R) = V_1(R_k(M\cap M'))= V_1(M\cap M') =  V_1(S^1_k(M\cap M'))= V_1(C_S).
\]


Therefore $C_R$ and $C_S$ are compatible; so let $C=C_R\ltimes_1 C_S$.
As $C_R\subseteq R_k(M)$ and $C_S \subseteq S_k^1(M)$ for all $M \in \F$,
it follows that every  $M\in \F$ contains $C$.
So $\F$ is a subset of the $t$-star of $\K_r\n$ with centre $C$.
We complete the proof by showing that $\F$ is the $t$-star of $\K_r\n$ with centre $C$, by proving that $M \in \F$ for all $C \subseteq M \in \K_r\n$.
For such an $M$, $X=R_k(M) \supseteq R_k(C) = C_R$ and $S_k^1(M) \supseteq S_k^1(C)=C_S$.
As $R_k(\F)$ is the $t$-star of $R_k(\K_r\n)$ with centre $C_R$ and $N_X^1(\F)$ is the $t$-star of $K_r(V_1(X),[n_k])$ with centre $C_S$,
$R_k(M)\in R_k(\F)$ and $S_k^1(M) \in N_X^1(\F)$ from which it follows that 
$M = R_k(M) \ltimes_1 S_k^1(M)$ is a member of $\F$.
\end{proof}

\section{Main Results}\label{sec:MainResults}
In this section we prove the main results of this paper.
The proofs of Theorems~\ref{thm: Gen int perms} and~\ref{thm: gen int hyperperm non-uniform} are given shortly, while their $t$-intersecting analogues are proven at the end of the section.
For the latter results and Theorem~\ref{thm: gen int hyperperm non-uniform}, a short overview of previous related work is given for context.
We begin by proving Theorem~\ref{thm: Gen int perms}.
\begin{proof}[Proof of Theorem~\ref{thm: Gen int perms}]
Suppose without loss of generality that $n_1\leq \cdots \leq n_k$.
We proceed by induction on $k$.
The base case, $k=2$, is Theorem~\ref{thm: gen perm}.
So, suppose $k \geq 3$ and the theorem holds for $k-1$.
The result is trivial if $n_i\leq 2$ for all $i$.
So we may assume that $n_k \geq 3$.
By Lemma~\ref{lem:properties}(i) and (ii),
$N_X^1(\mathscr{F})$ is intersecting
and $N_X^1(\mathscr{F}) \subseteq K_{r}(V_1(X),[n_k])$ for all $X\in R_k(\F)$, respectively.
Since $|V_1(X)| = r$ and $N_X^1(\mathscr{F})$ is an intersecting subfamily of $K_{r}(V_1(X),[n_k])$,
Theorem~\ref{thm: gen perm} implies that for each $X\in R_k(\F)$
\begin{equation}\label{eqn: int thm FRij}
\big|N^1_X(\mathscr{F})\big| \leq
\frac{(|V_1(X)|-1)_{r-1} (n_k-1)_{r-1}}{(r-1)!}
= (n_k-1)_{r-1}\,,
\end{equation}
with equality if and only if $N_X^1(\F)$ is a star of $K_{r}(V_1(X),[n_k])$.
By Lemma~\ref{lem:properties}(i), 
$R_k(\mathscr{F}) \subseteq R_k(\K_r\n)$ is also intersecting.
Thus, by the inductive hypothesis,
\begin{equation}\label{eqn: int thm Rij}
|R_k(\mathscr{F})| \leq \frac{(n_1-1)_{r-1} \cdots (n_{k-1}-1)_{r-1}}{(r-1)!}\,,
\end{equation}
with equality if and only if $R_k(\F)$ is a star of $R_k(\K_r\n)$.
Then by Lemma~\ref{lem:properties}(iv) and inequalities (\ref{eqn: int thm FRij}) and (\ref{eqn: int thm Rij})
\begin{equation}\label{eqn:bound of thm t=1}
     |\mathscr{F}|
  =  \sum_{X \in R_k(\mathscr{F})} |N^1_X(\mathscr{F})|
\leq \sum_{X \in R_k(\mathscr{F})}(n_k-1)_{r-1}
\leq \frac{(n_1-1)_{r-1} \cdots (n_k-1)_{r-1}}{(r-1)!}\,,
\end{equation}
which establishes the bound of the theorem.

So it suffices to show that maximum sized families are stars of $\K_r\n$.
When $\F$ is a star of $\K_r\n$ it is easy to check that $\F$ attains the bound in the theorem.
So suppose that $\F$ attains the bound of the theorem and so all the inequalities in (\ref{eqn:bound of thm t=1}) are equalities.
Then the inequality in (\ref{eqn: int thm FRij}) for each $X \in R_k(\F)$ and the inequality in (\ref{eqn: int thm Rij}) are also equalities
from which it follows that
$N_X^1(\F)$ is a star of $K_r(V_1(X),[n_k])$ for all $X \in R_k(\F)$ 
and $R_k(\F)$ is a star of $R_k(\K_r\n)$.
Hence as $n_k \geq 3$, Lemma~\ref{lem:whenSubfamststar} implies that $\F$ must be a star of $\K_r\n $.
\end{proof}

Erd\H{o}s, Ko and Rado~\cite{MR0140419} also considered intersecting families $\F \subseteq 2^{[n]}$ and proved
such a family has size at most $2^{n-1}$.
A star of $2^{[n]}$, i.e., the collection of all subsets of $[n]$ containing a fixed element $x \in [n]$,
has the maximum size, but not uniquely so. A general description of all intersecting families $2^{[n]}$ with size $2^{n-1}$
has not been found for general $n$; see~\cite{MR3066347} for an enumeration of the number of intersecting subfamilies of $2^{[n]}$ for small $n$. Here we prove an analogous result for $k$-partite $k$-graphs which, contrary to the result of Erd\H{o}s et al.\ \cite{MR0140419}, has a relatively simple classification of maximum sized families.

Here and henceforth we let $R$ denote a set of integers $R \in 2^{[n_1]}$ such that $\min\{r\in R\} \geq t$.
Let $\K_R \n = \bigcup_{r \in R} \K_r\n$. 
It follows from the definition of a $t$-star that 
the $t$-star of $\K_R(n_1,\ldots,n_k)$ with centre $C \in \K_t\n$ is the family
$\{M\in \K_R\n\;:\; C \subseteq M\}$.
Equivalently, the $t$-star of $\K_R(n_1,\ldots,n_k)$ with centre $C$
is the union of the $t$-stars of $\K_r(n_1,\ldots,n_k)$ with centre $C$ over $r\in R$.
Of course, analogous properties hold for stars of $\K_R(n_1,\ldots,n_k)$.
We prove an analogue of Theorem~\ref{thm: Gen int perms}
for families $ \mathscr{F} \subseteq \K_R\n$.
\begin{thm}\label{thm: gen int hyperperm non-uniform}
Let $\mathscr{F} \subseteq \K_R\n$ be an intersecting family.
Then
\[
  |\mathscr{F}| \leq \sum_{r \in R} \frac{(n_1-1)_{r-1} \cdots (n_k-1)_{r-1}}{(r-1)!}\,.
\]
Furthermore, equality is attained if and only if $\mathscr{F}$ is a star of $\K_R\n$.
\end{thm}

\begin{proof}
If $n_i \leq 2$ for all $i \in [k]$, then $R \subseteq [2]$ and simple case analysis
shows that the result holds.
So suppose that $n_k\geq 3$.
Let $\mathscr{F}_{r} = \mathscr{F} \cap \K_r\n$ for all $r \in R$.
By Theorem~\ref{thm: Gen int perms},
\begin{equation}\label{eqn: F non unif t=1}
       |\mathscr{F}|
    =  \sum_{r \in R} |\mathscr{F}_{r}|
  \leq \sum_{r \in R} \frac{(n_1-1)_{r-1} \cdots (n_k-1)_{r-1}}{(r-1)!} \,,
\end{equation}
and equality holds if and only if
$\mathscr{F}_{r}$ is a star of $\K_r\n$ for all $r \in R$.
We complete the proof by showing that $\F$ has maximum size if and only if $\F$ is a star of $\K_R\n$.
Clearly a star of $\K_R\n$ has maximum size.
If $\F$ has maximum size, then the inequality in (\ref{eqn: F non unif t=1}) is an equality
and so $\mathscr{F}_{r}$ is a star of $\K_r\n$ for all $r\in R$.
As the union of the stars $\F_r$ over $r\in R$ is intersecting and $n_k\geq 3$,
Lemma~\ref{l:distMsInStars} implies that all the stars $\F_r$ must have the same centre $C$.
Therefore, $\F$ is the star of $\K_R\n$ with centre $C$.
\end{proof}

\subsection{$t$-intersecting theorems for $k$-partite $k$-graphs}\label{sec:proof of thm for t-intersecting}

In this subsection we prove $t$-intersecting analogues of Theorems~\ref{thm: Gen int perms} and~\ref{thm: gen int hyperperm non-uniform}.
First we briefly provide some background of previous work related to $t$-intersecting families of sets and permutations.
Erd\H{o}s et al.\ \cite{MR0140419} also proved that for $r>t$ and $n$ sufficiently larger than $r$ any $t$-intersecting subfamily $\F$ of $\binom{[n]}{r}$ has size at most $\binom{n-t}{r-t}$ and moreover $\F$ has exactly that size if and only if $\F$ is a $t$-star of $\binom{[n]}{r}$.
Frankl~\cite{MR519277} proved that result for $n \geq(t+1)(r-t+1)$ and $t\geq 15$. Finally, Wilson \cite{MR771733} proved the same result for all $n \geq (t+1)(r-t+1)$ and $t\geq 1$, except the uniqueness of maximum sized families when $n =(t+1)(r-t+1)$.
This is the best possible with respect to $n$ as larger families than $t$-stars exist for $n <(t+1)(r-t+1)$ and families equally as large as $t$-stars exist when $n =(t+1)(r-t+1)$; see the Conclusion.

Frankl and Deza~\cite{MR0439648} conjectured that maximum sized $t$-intersecting family of permutations are $t$-stars for $n$ sufficiently larger than $t$.
This conjecture was later confirmed by Ellis, Friedgut, and Pilpel~\cite{MR2784326}.
Meagher and Razafimahatratra~\cite{MeRa} proved that a $2$-intersecting subfamily of $\fP_n(n,n)$
has size at most $(n-2)!$ (the size of a $2$-star of $\fP_n(n,n)$) and gave a algebraic characterisation of maximum sized families for all $n\geq 2t+1$.
More recently, Keller et al.\ \cite{KLMS24} proved, using a method based on hypercontractivity for global functions, that there is a universal constant $c_0$ such that for all $t \leq c_0 n$ any $t$-intersecting family of permutations $\F \subseteq\fP_n(n,n)$ has size at most $(n-t)!$. Furthermore, they showed that if $|\F| \geq 0.75  (n-t)!$, then $\F$ is contained in a $t$-star.
Kupavskii and Zakharov~\cite{KuZa24}, using an approach they call spread approximations method, were able to prove a similar result, namely that any $t$-intersecting family of permutations $\F \subseteq\fP_n(n,n)$ has size at most $(n-t)!$ for any $t$ and $n$ which satisfy $n>2^{22}t\log_2(n)^2$ and, if $|\F| > \frac{2}{3}(n-t)!$, then $\F$ is contained in a $t$-star.
Crucially, Kupavskii and Zakharov~\cite{KuZa24} were are able to prove their result without exploiting the properties of $\fP_n(n,n)$, most notably the representation theory of $\fP_n(n,n)$, unlike many other proofs which heavily rely on the algebraic structure of $\fP_n(n,n)$.

Borg~\cite{MR2526749} proved the following result for $t$-intersecting families $\F \subseteq K_R(n_1,n_2)$ for $\max\{n_1,n_2\}$ sufficiently larger than $r$ and $t$.

\begin{thm}\label{thm:t-int gen perms non-uniform}
Let $\F \subseteq K_R(n_1,n_2)$, where
$n_2$ is sufficiently larger than $\max\{r\in R\}$ and $t$.
Then 
\[
|\mathscr{F}| \leq \sum_{r \in R}\frac{(n_1-t)_{r-t} (n_2-t)_{r-t}}{(r-t)!} \,.
\]
Furthermore, equality is attained if and only if $\mathscr{F}$ is a $t$-star of $K_R(n_1,n_2)$.
\end{thm}
\noindent
Brunk and Huczynska~\cite{MR2587034} independently proved Theorem~\ref{thm:t-int gen perms non-uniform} for the cases when $R = \{r\}$ and $r=n_1$.
An analogous result was found for families of perfect matchings of the complete graph that are 2-intersecting by Fallat, Meagher and Shirazi~\cite{FMS}.

Using Theorem~\ref{thm:t-int gen perms non-uniform} we will prove the following $t$-intersecting analogue of Theorem~\ref{thm: Gen int perms}
and subsequently a $t$-intersecting analogue of Theorem~\ref{thm: gen int hyperperm non-uniform}.
\begin{thm}\label{thm: Gen t-int perms hyper}
Let $\mathscr{F} \subseteq \K_r\n $ be a $t$-intersecting family, where $n_2$, the second smallest $n_i$, is sufficiently larger than $r$ and $t$. Then
\[
  |\mathscr{F}| \leq \frac{(n_1-t)_{r-t} \cdots (n_k-t)_{r-t}}{(r-t)!} \,.
\]
Furthermore, equality is attained if and only if $\mathscr{F}$ is a $t$-star of $\K_r\n$.
\end{thm}

\begin{proof}
We proceed by induction on $k$.
The base case, $k=2$, is Theorem~\ref{thm:t-int gen perms non-uniform} for $R = \{r\}$.
So, suppose $k \geq 3$ and the theorem holds for  $k-1$.
By Lemma~\ref{lem:properties}(i) and (ii),
$N_X^1(\mathscr{F})$ is $t$-intersecting
and $N_X^1(\mathscr{F}) \subseteq K_{r}(V_1(X),[n_k])$ for all $X\in R_k(\F)$.
Since $|V_1(X)| = r$, $N_X^1(\mathscr{F})$ is a $t$-intersecting subfamily of $K_{r}(V_1(X),[n_k])$
and $n_k$ is sufficiently larger than $r$ and $t$,
Theorem~\ref{thm:t-int gen perms non-uniform} (with $R=\{r\}$) implies that for each 
$X \in R_j(\F)$
\begin{equation}\label{eqn: t-int thm FRij}
\big|N^1_X(\mathscr{F})\big| \leq
\frac{(|V_1(X)|-t)_{r-t} (n_k-t)_{r-t}}{(r-t)!}
= (n_k-t)_{r-t}\,
\end{equation}
where equality is attained if and only if $N^1_X(\mathscr{F})$ is a $t$-star of $K_{r}(V_1(X),[n_k])$.
By Lemma~\ref{lem:properties}(i), 
$R_k(\mathscr{F}) \subseteq R_k(\K_r\n)$ is also $t$-intersecting.
As $n_2$ is sufficiently larger than $r$ and $t$,
by the inductive hypothesis,
\begin{equation}\label{eqn: t-int thm Rij}
|R_k(\mathscr{F})| \leq \frac{(n_1-t)_{r-t} \cdots (n_{k-1}-t)_{r-t}}{(r-t)!}\,,
\end{equation}
with equality if and only if $R_k(\mathscr{F})$ is a $t$-star of $R_k(\K_r\n)$.
Then by Lemma~\ref{lem:properties}(iv) and inequalities (\ref{eqn: t-int thm FRij}) and (\ref{eqn: t-int thm Rij})
\begin{equation}\label{eqn:bound of thm}
     |\mathscr{F}|
  =  \sum_{X \in R_k(\mathscr{F})} |N^1_X(\mathscr{F})|
\leq \sum_{X \in R_k(\mathscr{F})}(n_k-t)_{r-t}
\leq \frac{(n_1-t)_{r-t} \cdots (n_k-t)_{r-t}}{(r-t)!}\,,
\end{equation}
which establishes the bound of the theorem.

So it suffices to prove that maximum sized families are $t$-stars of $\K_r\n$.
When $\F$ is a $t$-star of $\K_r\n$ it is easy to check that $\F$ has maximum size.
So suppose that $\F$ has maximum size and so the inequalities in (\ref{eqn:bound of thm}) are equalities.
Then equality holds in (\ref{eqn: t-int thm FRij}) for each $X \in R_k(\F)$ and equality holds in (\ref{eqn: t-int thm Rij})
from which it follows that $N_X^1(\F)$ is a $t$-star of $K_r(V_1(X),[n_k])$ for all $X \in R_k(\F)$ and $R_k(\F)$ is a $t$-star of $R_k(\K_r\n)$.
Thus by Lemma~\ref{lem:whenSubfamststar}, $\F$ must be a $t$-star of $\K_r\n $.
\end{proof}
In fact, we expect that Theorem~\ref{thm: Gen t-int perms hyper} is true even if the condition 
``$n_2$ is sufficiently larger than $r$" is replaced with ``$n_k$ is sufficiently larger than $t$" (and $r \leq n_1$ is the only condition placed on the size of $r$ compared with each of $n_1,\ldots, n_k$). 
However we can say something about the special case when $r=n_1= \cdots=n_k$ as follows.
By using Ellis et al.\ \cite{MR2784326} result that any maximum sized $t$-intersecting family of permutations is a $t$-star for $n$ sufficiently larger than $t$ as the base case, the proof above can easily be amended to prove the following. 
\begin{thm}\label{thm:t-intHyperPerm}
Let $\F \subseteq \K_n(n,\ldots ,n)$ be a $t$-intersecting family, where $\K_n(n,\ldots ,n)$ has $k$ parts. Then for $n$ sufficiently larger than $t$
\[
  |\mathscr{F}| \leq \frac{((n-t)_{r-t})^{k}}{(r-t)!}.
\]
Furthermore, equality is attained if and only if $\F$ is a $t$-star of $\K_n(n,\ldots ,n)$.
\end{thm}
After the submission this paper, the results of Keller et al.\ \cite{KLMS24} and Kupavskii and Zakharov~\cite{KuZa24} on $t$-intersecting families of permutations describe earlier were published. Either of these results could be used in place of Ellis et al.\ \cite{MR2784326} result to strengthen the theorem above slightly, by requiring a weaker condition on $n$.

Katona~\cite{MR0168468} proved a $t$-intersecting analogue of Erd\H{o}s et al.\ \cite{MR0140419}
result for intersecting families of $2^{[n]}$.
Perhaps surprisingly, $t$-stars of $2^{[n]}$ are not maximum sized $t$-intersecting families
of $2^{[n]}$. We end the section by proving the following $t$-intersecting analogue of Theorem~\ref{thm: gen int hyperperm non-uniform}, which suggests that the behaviour of $r$-matchings of $\K\n$ differs from sets in that regard, in general.


\begin{thm}\label{thm:t-int gen perms hyper non-uniform}
Let $\mathscr{F} \subseteq \K_R\n$ be a $t$-intersecting family, where
$n_2$ is sufficiently larger than $\max\{r\in R\}$.
Then
\[
  |\mathscr{F}| \leq \sum_{r \in R} \frac{(n_1-t)_{r-t} \cdots (n_k-t)_{r-t}}{(r-t)!}\,.
\]
Furthermore, equality is attained if and only if $\mathscr{F}$ is a $t$-star of $\K_R\n$.
\end{thm}

\begin{proof}
Let $\mathscr{F}_{r} = \mathscr{F} \cap \K_r\n$ for all $r \in R$.
By Theorem~\ref{thm: Gen t-int perms hyper},
\begin{equation}\label{eqn: F non unif}
       |\mathscr{F}|
    =  \sum_{r \in R} |\mathscr{F}_{r}|
  \leq \sum_{r \in R} \frac{(n_1-1)_{r-1} \cdots (n_k-1)_{r-1}}{(r-1)!} \,,
\end{equation}
and equality holds if and only if
$\mathscr{F}_{r}$ is a $t$-star of $\K_r\n$, for all $r \in R$.
We complete the proof by showing that $\F$ has maximum size if and only if $\F$ is a $t$-star of $\K_R\n$.
Clearly a $t$-star of $\K_R\n$ has maximum size. 
If $\F$ has maximum size, then the inequality in (\ref{eqn: F non unif}) is an equality
and so $\mathscr{F}_{r}$ is a $t$-star of $\K_r\n$ for all $r\in R$.
By Lemma~\ref{l:distMsInStars}, the union of the $t$-stars $\F_r$ over $r\in R$ is $t$-intersecting only if each $\F_r$ has the same centre. 
Therefore, each $\F_r$ must have the same centre $C$ from which it follows that $\F$ is the $t$-star of $\K_R\n$ with centre $C$.
\end{proof}
As with Theorem~\ref{thm: Gen t-int perms hyper}, we expect the above theorem to hold under the weaker assumption that $n_k$ is sufficiently larger than $t$ and no conditions (other than the obvious condition $\max\{r\in R\}\leq n_1$) are placed on the size of the elements of $R$ with respect to $n_1,\ldots , n_k$.

\section{Concluding Remarks}\label{sec:conclusion}
%

The following families demonstrate that there are $t$-intersecting subfamilies of $\binom{[n]}{r}$
which are larger than $t$-stars of $\binom{[n]}{r}$, whenever $n < (t+1)(r-t+1)$.
For each $0 \leq i\leq r-t$, and $C \in \binom{[n]}{t+2i}$ let
\[
\G_i(C) = \left\lbrace G\in \binom{[n]}{r}\;:\; |G \cap C| \geq t+i\right\rbrace\,.
\]
Ahlswede and Khachatrian~\cite{MR1429238} proved that any maximum sized $t$-intersecting subfamily of 
$\binom{[n]}{r}$ is $\G_i(C)$ for some $i$ and $C$. In very special cases of $n$, $\G_i(C)$
and $\G_{i+1}(C')$ can be the same size and both be maximum sized $t$-intersecting families. In particular, when $n =(t+1)(r-t+1)$ the families $\G_0(C)$ and $\G_1(C')$ both are maximum sized $t$-intersecting families; as the former is in fact a $t$-star of $\binom{[n]}{r}$, this explains why
the uniqueness of maximum sized families could not be obtained when $n =(t+1)(r-t+1)$ in Wilson's~\cite{MR771733} result mentioned earlier.


Natural analogues of the families $\G_i(C)$  have been proposed previously for permutations and $r$-matchings of $K(n_1,n_2)$; see for example \cite{MR2784326} and~\cite{MR2587034}. Here we give the more general natural analogue of $\G_i(C)$ for $r$-matchings of $\K\n$,
as we will discuss these shortly. Let $0 \leq i\leq \min\{r-t,\frac{n_1-t}{2}\}$, and  $C \in \K_{t+2i}\n$. Then we define: 
\[
\fH_i(C) =\{M \in \K_r\n\;:\; |M \cap C| \geq t+i\}\,.
\]
As with the families $\G_i(C)$,
every $\fH_i(C) $ is $t$-intersecting and in special cases can be larger than $t$-stars, as the following example demonstrates.
\begin{exa}
Let $k=2$, $t=4$, $r=n_1=n_2=8$ and $C = \{(j,j): j \in [6]\}$. Then 
$\fH_1(C)$ has size $(3!-2!)\binom{6}{5} + 2!=26$, while a $4$-star of $K_8(8,8)$ has size $4!=24$.
\end{exa}
Ellis~et al.\ \cite{MR2784326} conjectured that any maximum $t$-intersecting family of permutations must be 
$\fH_i(C)$ for some $i$ and $C$. This conjecture remains largely open for cases when $t$-stars are not maximum sized $t$-intersecting families. Some progress has been made recently by Ellis and Lifshitz ~\cite{ElLi22} who established the conjecture for $t = O(\frac{\ln n}{\ln(\ln n)})$.
Brunk and Hucvzynska~\cite{MR2587034} made a similar yet more precise conjecture to that of Ellis~et al.\ \cite{MR2784326} for $t$-intersecting subfamilies of $K_r(r,n_2)$, by also specifying which of the families $\fH_i(C)$ should have maximum size for all values of $n_2$.
Brunk and Hucvzynska (see Theorem 3.5~\cite{MR2587034}) were able to prove that an $(r-a)$-intersecting subfamily of $K_{r}(r,r+b)$ must be $\fH_i(C)$ for some $i$ and $C$, provided that $a$ and $b$ are positive and $r$
is sufficiently larger than $a$ and $b$.

Naturally we expect a similar result to the conjectures of Ellis~et al.\ \cite{MR2784326} and Brunk and Hucvzynska~\cite{MR2587034} should hold for $t$-intersecting families $ \F \subseteq \K_r\n$.
However we now demonstrate why one cannot expect such a result to follow from the conjectures of Ellis~et al.\ \cite{MR2784326} and Brunk and Hucvzynska~\cite{MR2587034}
directly using the techniques of this paper.
Let $B_{r,t}(n_1,\ldots, n_k)$ be the maximum size of a $t$-intersecting family $ \F \subseteq \K_r\n$.
Using Lemma~\ref{lem:properties}(iv) and the definitions of $B_{r,t}(n_1,\ldots, n_{k-1})$ and $B_{r,t}(n_1, n_k)$, one can immediately obtain the following bound for the size of a $t$-intersecting family $ \F \subseteq \K_r\n$
\begin{equation}\label{e:gen_upper_bound}
|\mathscr{F}| = \sum_{X \in R_k(\mathscr{F})} |N_X^1(\mathscr{F})|\leq B_{r,t}(n_1,\ldots, n_{k-1})B_{r,t}(r, n_k)\,
\end{equation}
since $N_X^1(\mathscr{F})\subseteq K(V_1(X),[n_k])$, by Lemma~\ref{lem:properties}(ii), which is isomorphic to $K(r,n_k)$ for all $X \in R_k(\F)$.
To prove the bounds on $t$-intersecting families $\F\subseteq \K_r\n$ in Theorems~\ref{thm: Gen t-int perms hyper} and \ref{thm:t-intHyperPerm} it was sufficient to use \eqref{e:gen_upper_bound} as the bound in \eqref{e:gen_upper_bound} was tight for some families $\F$, namely $t$-stars of $\K_r\n$
and, for such families, $R_j(\F)$ and $N_X^i(\F)$ are maximum sized $t$-intersecting families for all $X \in R_k(\F)$.
%

We will prove that the bound in \eqref{e:gen_upper_bound} cannot be tight if 
$\fH_i(C)$ is a maximum sized $t$-intersecting family that is not a $t$-star. 
For simplicity we maintain the convention $r\leq n_1 \leq \cdots \leq n_k$.
Let $1 \leq i\leq \min\{r-t, \frac{n_1-t}{2}\}$  and  $C_{j} = \{(a,\ldots,a): a \in [t+i+j]\} \in \K_{t+i+j}\n$ for all $0 \leq j \leq i$ and consider $\fH_i(C_{i})$. 
Assume that $t+2<n_k$ when $i=1$ and $r=t+2=n_1$, so that $\fH_i(C_{i})$ does not trivially contain one element in that case. 
Let $X \in R_k(\fH_i(C_{i}))$ and let $D$ be the subset of $C_{i}$ such that $X \cap R_k(C_{i})=R_k(D)$ and $|D|=t+i+j$ where $0 \leq j \leq i$.
For any $Y \in N_X^1(\fH_i(C_{i}))$, $X \ltimes_1 Y$ must contain at least $t+i$ edges of $C_{i}$, by the definition of $\fH_i(C_{i})$ and, as $X \cap R_k(C_{i})=R_k(D)$, $(X \ltimes_1 Y) \cap C_i \subseteq D$. 
So in fact $(X \ltimes_i Y)$ contains at least $t+i$ edges of $D$ for all $Y \in N_X^1(\fH_i(C_{i}))$ and so every $Y \in N_X^1(\fH_i(C_{i}))$ contains at least $t+i$ edges of $S_k^1(D)$. 
Conversely, any $Y \in K(V_1(X),[n_k])$ that contains at least $t+i$ edges of $S_k^1(D)$ will be a member of $N_X^1(\fH_i(C_{i}))$, since $ |(X \ltimes_1 Y) \cap D| \geq t+i$ and thus $X \ltimes_1 Y \in \fH_i(C_{i})$ for such a $Y$.
Hence 
\[
N_X^1(\fH_i(C_{i}))=\left\{Y \in K_r(V_1(X),[n_k])\;:\; |Y \cap S_k^1(D)| \geq t+i \right\}\,.
\]
As the size of the above only depends on $j=|D|$ (and $r=|V_1(X)|$), 
for simplicity we only consider
\[
N_{j} = \left\{Y \in K_r(r,n_k)\;:\; |Y \cap S_k^1(C_{j})| \geq t+i \right\}
\]
for $0 \leq j \leq i$. Clearly, $N_{j} \subseteq N_{j+1}$ for $j \leq i-1$, since $|Y \cap S_k^1(C_{j})| \geq t+i $ implies that $|Y \cap S_k^1(C_{j+1})| \geq t+i$.
One can easily construct a matching $Y' \in K_r(r,n_k)$ 
which contains $(a,a) \in K_1(r,n_k)$ for all $j+2\leq a\leq t+i+j+1$ but no other elements of $S_k^1(C_{j+1})$, where, in the exceptional case when $i=1$, $j=0$ and $r=t+2=n_1$,
the assumption that $n_k>t+2$ ensures that there is an edge in $K_{t+2}(t+2,n_k)$ that is disjoint from all the edges in $S_k^1(C_{1})\setminus \{(1,1)\}$ that isn't $(1,1)$.
Clearly $|Y' \cap S_k^1(C_{j+1})|=t+i$ and $|Y' \cap S_k^1(C_{j})| = t+i-1$.
Thus $Y' \in N_{j+1}$ but $Y' \notin N_{j}$. 
It follows that the size of $N_j$ is increasing with respect to $j$ and, in particular, $N_0 ,\ldots , N_i$ can not all be the same size and so can not all be of the maximum size of a $t$-intersecting family. Thus the bound in \eqref{e:gen_upper_bound} is not tight.

\end{document}